\documentclass{amsart}
\pdfoutput=1

\usepackage{amsmath,amssymb,latexsym}
\usepackage{mathtools}
\usepackage[utf8]{inputenc}
\usepackage{units}
\usepackage{enumitem}
\usepackage{array}

\usepackage{placeins}
\usepackage[obeyspaces,hyphens,spaces]{url}

\usepackage{graphicx,color}
\graphicspath{{img/}}

\usepackage{tikz}
\usetikzlibrary{matrix,arrows,calc}

\usepackage{csquotes}
\usepackage[style=alphabetic,backend=bibtex]{biblatex}
\bibliography{sources}

\usepackage{amsthm}
\usepackage{hyperref}

\makeatletter
\ifcsname phantomsection\endcsname
    \newcommand*{\@gobblenexttocentry}[9]{}
\else
    \newcommand*{\@gobblenexttocentry}[4]{}
\fi
\newcommand*{\addsubsection}{%
    \addtocontents{toc}{\protect\@gobblenexttocentry}%
    \subsection*}
\makeatother

\usepackage{aliascnt}

\newcommand{\mynewtheorem}[4]{
  \if\relax\detokenize{#3}\relax 
    \if\relax\detokenize{#4}\relax 
      \newtheorem{#1}{#2}
    \else
      \newtheorem{#1}{#2}[#4]
    \fi
  \else
    \newaliascnt{#1}{#3}
    \newtheorem{#1}[#1]{#2}
    \aliascntresetthe{#1}
  \fi
  \expandafter\def\csname #1autorefname\endcsname{#2}
}

\def\equationautorefname~#1\null{(#1)\null}

\mynewtheorem{theorem}{Theorem}{}{section}
\mynewtheorem{lemma}{Lemma}{theorem}{}
\mynewtheorem{rem}{Remark}{lemma}{}
\mynewtheorem{prop}{Proposition}{lemma}{}
\mynewtheorem{cor}{Corollary}{lemma}{}
\mynewtheorem{dfn}{Definition}{lemma}{}


\def\defbb#1{\expandafter\def\csname#1\endcsname{\mathbb{#1}}}
\def\defcal#1{\expandafter\def\csname#1\endcsname{\mathcal{#1}}}
\def\deffrak#1{\expandafter\def\csname#1\endcsname{\mathfrak{#1}}}
\def\defop#1{\expandafter\def\csname#1\endcsname{\operatorname{#1}}}

\makeatletter
\def\defcals#1{\@defcals#1\@nil}
\def\@defcals#1{\ifx#1\@nil\else\defcal{#1}\expandafter\@defcals\fi}
\def\deffraks#1{\@deffraks#1\@nil}
\def\@deffraks#1{\ifx#1\@nil\else\deffrak{#1}\expandafter\@deffraks\fi}
\def\defbbs#1{\@defbbs#1\@nil}
\def\@defbbs#1{\ifx#1\@nil\else\defbb{#1}\expandafter\@defbbs\fi}
\def\defops#1{\@defops#1,\@nil}
\def\@defops#1,#2\@nil{\if\relax#1\relax\else\defop{#1}\fi\if\relax#2\relax\else\expandafter\@defops#2\@nil\fi}
\makeatother

\defbbs{CPZRQD}

\defcals{MAEOXY}
\newcommand{\CC}{\mathcal{C}}

\deffraks{b}

\defops{End,Aut,Aff,Jac,SL,GL,PSL,SO,Stab,PStab,Re,Im,div,Gal,mod}

\makeatletter%
\protected\def\W{\@ifnextchar[{\W@arg}{W_D}}
\def\W@arg[#1]{%
  \def\@D{D}%
  \def\c@W##1,##2,##3\@nil{\def\c@D{##1}\def\n@D{##2}}\c@W#1,,\@nil%
  \def\@W##1=##2=##3\@nil{\def\@DD{##1}\def\@DDD{##2}}\expandafter\@W\c@D==\@nil%
  \ifx\@D\@DD W_{\@DDD}%
    \expandafter\ifx\n@D\relax\relax\else(\n@D)\fi%
  \else W_D(#1)\fi%
}
\protected\def\parexp#1{\@ifnextchar^{(#1)}{#1}}
\protected\def\parind#1{\@ifnextchar_{(#1)}{#1}}
\makeatother%

\renewcommand{\i}{\mathrm{i}}
\renewcommand{\d}[1]{\mathrm{d}#1}
\renewcommand{\H}
{\mathrm{H}}

\def\ep{\varepsilon}

\def\abs#1{\lvert#1\rvert}

\begin{document}
\title[Galois Action and Spin in Genus 3]{The Galois Action and a Spin Invariant for Prym-Teichmüller Curves in Genus 3}
\author{Jonathan Zachhuber}
\thanks{The author was partially supported by ERC-StG 257137.}
\address{FB 12 -- Institut für Mathematik\\Johann Wolfgang Goethe-Universität\\Robert-Mayer-Str. 6--8\\D-60325 Frankfurt am Main}
\email{zachhuber@math.uni-frankfurt.de}

\begin{abstract}
Given a Prym-Teichmüller curve in $\M_3$, this note provides an invariant that sorts the cusp prototypes of Lanneau and Nguyen by component. This 
can be seen as an analogue of McMullen's genus $2$ spin invariant, although the source of this invariant is different.
Moreover, we describe the Galois action on the cusps of these Teichmüller curves, extending the results of Bouw and Möller in genus $2$. We use this to show that the components of the genus $3$ Prym-Teichmüller curves are homeomorphic.
\end{abstract}
\maketitle

\tableofcontents

\section{Introduction}

A \emph{Teichmüller curve} is a curve inside the moduli space $\M_g$ of smooth projective genus $g$ curves that is totally geodesic for the Teichmüller metric. Every Teichmüller curve arises as the projection of the $\GL_2^+(\R)$ orbit of a flat surface (see \autoref{cusps} and the references therein for background and definitions). Only a few infinite families of primitive Teichmüller curves are known. McMullen constructed several families in low genera, among them, for every discriminant $D$, the \emph{Prym-Teichmüller} or \emph{Prym-Weierstraß} curves $\W$ in genus $3$ \cite{mcmullenprym}.

This family is fairly well understood. In particular, Möller calculated the Euler characteristic \cite{moellerprym}, Lanneau and Nguyen enumerated the cusps and connected components \cite{lanneaunguyen}, and the number and type of orbifold points are determined in \cite{TTZ}. The aim of this note is to complete the classification of the topological components by showing that the connected components of $\W$ are always homeomorphic.

To be more precise, in \cite{lanneaunguyen}, Lanneau and Nguyen show that $\W$ has at most two components for any $D$ and has two components if and only if $D\equiv 1\mod 8$.

\begin{theorem}\label{comphomeo}
Let $D\equiv 1\mod 8$, which is not a square. Then the two components of $\W$ are homeomorphic.
\end{theorem}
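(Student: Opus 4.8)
The plan is to show that the two components are Galois conjugate as curves over a number field, and then to invoke the fact that Galois conjugate curves have the same genus and the same number of cusps and orbifold points of each order, so that they are homeomorphic as orbifolds. The first, formal, ingredient is that $\W$, being a Teichmüller curve, is defined over $\overline{\Q}$, and more precisely that its cusps — which by the work of Lanneau and Nguyen are in bijection with an explicit set of prototypes — are defined, as points of the coarse moduli space, over the ring class field $H$ of the quadratic order $\mathcal{O}_D$ of discriminant $D$. Consequently $\Gal(H/\Q)$ acts on the set of cusps, and this action permutes the at most two irreducible components of $\W$.

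The second ingredient, extending the genus $2$ analysis of Bouw and Möller, is to make this Galois action explicit on the level of prototypes: the ideal class group $\operatorname{Cl}(\mathcal{O}_D)$ acts by an explicit transformation of the prototype data, and complex conjugation acts by a sign change, with the essential bookkeeping being how the width and combinatorial type of each cusp transform. The crux is to combine this with the spin invariant. Since the spin invariant is constant on each component, it descends to a function on the set $\{X_1,X_2\}$ of components, and it takes both values in $\{0,1\}$ precisely when $D\equiv 1\bmod 8$, matching exactly the two components. I would therefore compute the effect of the Galois action on the spin of a prototype and exhibit a single element $\sigma\in\Gal(H/\Q)$ — concretely, one arising from a suitable ideal class, or complex conjugation — that changes the spin value of at least one cusp. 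Because $\sigma$ maps cusps to cusps and components to components, and the spin invariant is a complete invariant of the component, such a $\sigma$ cannot fix both components; hence $\sigma(X_1)=X_2$, so $X_1$ and $X_2$ are Galois conjugate, and the homeomorphism follows from the first paragraph.

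I expect the main obstacle to lie in the explicit computations underlying the second step: deriving the Galois action on the prototype data in genus $3$, where the Prym involution and the resulting period coordinates complicate the picture relative to genus $2$, and then verifying — via a congruence computation modulo a power of $2$ — that this action is genuinely incompatible with the partition of the cusps by spin. Once the spin invariant is shown not to be Galois invariant, the transitivity of the Galois action on the components, and hence \autoref{comphomeo}, follows formally; the remaining care is only to confirm that Galois conjugation preserves the full orbifold structure (the orders of the orbifold points and the number of cusps), which it does because these are discrete invariants of the $\overline{\Q}$-curve and its finite set of marked points.
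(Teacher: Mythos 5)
Your overall strategy --- realise the two components as Galois conjugates and use the spin invariant to exhibit a $\sigma$ that swaps them --- is exactly the paper's, and your formal framing (the Galois action preserves the locus $\W$, hence permutes its two components, and conjugate curves are homeomorphic) matches the paper's concluding argument. But the crux, the explicit Galois action on the cusps, is deferred in your proposal (``I would therefore compute\dots''), and the two mechanisms you suggest for carrying it out are problematic. Complex conjugation cannot work: $D>0$, so all the relevant cusp data ($\lambda=(e+\sqrt D)/2$, the cylinder widths, and the coordinates of the stable fibres) lies in the \emph{real} field $\Q(\sqrt D)$, on which complex conjugation acts trivially; it can therefore never change the spin of a cusp. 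And the ring-class-field/ideal-class-group machinery you import from the Bouw--M\"oller genus $2$ analysis is neither established in this setting nor needed: you give no argument that the cusps of $\W$ are defined over the ring class field of $\O_D$, nor any description of how $\operatorname{Cl}(\O_D)$ would act on Lanneau--Nguyen prototypes, and the paper uses nothing of this kind.

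What the paper does instead is compute algebraic models of the stable fibres directly. \autoref{3cylinders} shows every periodic direction decomposes the surface into three cylinders, so each singular fibre is a trinodal curve (\autoref{stablefibres}); \autoref{stablecoordinates} then normalises it as $\P^1$ with six marked points $\pm1,\pm x_1,\pm x_3$ determined by a single parameter $s\in\Q(\sqrt D)$, namely $s=(e+\sqrt D)/4w$ for $\ep=1$ and $s=2w/(e+\sqrt D)$ for $\ep=-1$. Consequently the only Galois datum that matters is the image of $\sqrt D$, and taking any $\sigma$ with $\sqrt D\mapsto-\sqrt D$ and comparing coefficients yields \autoref{galoiscusps}: $[w,h,e,\ep]$ goes to $[h,w,e,-\ep]$ or to $[w,h,-e,\ep]$, according to the size of $h$ relative to $\lambda$ or $\nicefrac{\lambda}{2}$. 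In either case $e+\ep\mapsto e-\ep\pmod 4$, and since $e$ and $\ep$ are both odd, $e+\ep\not\equiv e-\ep\pmod 4$; so by \autoref{spin} \emph{every} cusp (not merely one, as you aim for) changes component (\autoref{switching}). So your plan has the right skeleton, but completing it would in effect require reconstructing \autoref{stablecoordinates} and \autoref{galoiscusps}; the ``congruence computation modulo a power of $2$'' you anticipate as the main obstacle is, once the correct $\sigma$ and the prototype transformation are in hand, a one-line observation.
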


A similar result was obtained by Bouw and Möller 
\cite{BMFuchs} for Teichmüller curves in genus $2$. Note that a Teichmüller curve is always defined over a number field but is never compact. Both approaches rely on determining the stable curves associated to the cusps of the Teichmüller curve and describing explicitly the Galois action on these cusps. At this point, it is crucial that we are able to determine of a pair of cusps if they lie on the same component or not. In genus $2$, Bouw and Möller could use McMullen's spin invariant \cite{mcmullenspin} to achieve this.


However, while Lanneau and Nguyen list prototypes corresponding to the cusps of Prym-Teichmüller curves \cite{lanneaunguyen}, they do not provide an effective analogue of the spin invariant. Here we give such an invariant, which is, moreover, easy to compute.


\begin{theorem}\label{spin}
Let $D\equiv 1\mod 8$, which is not a square. Given a cusp prototype $[w,h,t,e,\ep]$ (see \autoref{cusps}), the associated cusp of $\W$ lies on the component $\W^i$ if and only if
\[2i\equiv e+\ep\mod 4,\]
for $i=1,2$.
\end{theorem}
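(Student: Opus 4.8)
The plan is to attach to each surface in $\W$ a $\Z/2$-valued spin parity, to check that it is a deformation invariant and hence constant on each component $\W^i$, and then to read it off at every cusp from the prototype data. The first issue is to pin down the correct \emph{source} of this parity, which --- as the analogy with McMullen's invariant already hints --- cannot be the obvious one. Indeed the surfaces parametrised by $\W$ lie in the stratum $\Hcal(4)$, and, being double covers of the genus-one quotient $X/\rho$ rather than of $\mathbb{P}^1$, are non-hyperelliptic; hence each component of $\W$ maps into the single component $\Hcal^{\mathrm{odd}}(4)$, and the Kontsevich--Zorich parity of the canonical theta characteristic $K_X^{1/2}$ is the same (odd) on all of $\W$. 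The separating invariant must therefore be finer, and I would build it from the extra Prym data: the involution $\rho$ acts on the winding-number quadratic refinement of $\H^1(X;\Z/2)$, and restricting to the anti-invariant part $\H^1(X;\Z/2)^-$ produces a spin parity that remembers the Prym structure. Establishing that this restricted form is non-degenerate and that its Arf invariant is well defined --- the Prym polarisation is of type $(1,2)$, so a naive reduction mod $2$ is degenerate and some care is needed here --- is itself part of the setup.

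The heart of the argument is to compute this parity at a cusp directly from its prototype $[w,h,t,e,\ep]$ (\autoref{cusps}). At such a cusp the surface degenerates into horizontal cylinders whose widths, heights and twists are recorded by $w,h,t$, with the order-four zero and the fixed points of $\rho$ sitting at prescribed positions on the cylinder boundaries. I would choose a basis of $\H^1(X;\Z/2)^-$ adapted to this decomposition --- anti-invariant combinations of cylinder cores together with transverse curves --- and compute the index (winding number relative to the flat metric) of each basis curve from the cylinder combinatorics. Substituting these indices into an Arf-invariant formula of the shape $\operatorname{Arf}=\sum_j(\operatorname{ind}(a_j)+1)(\operatorname{ind}(b_j)+1)\bmod 2$ then gives a sum over the cylinders which, after using the defining relation of the prototype and the parity constraints that force $e+\ep$ to be even, I expect to collapse to the single residue $\tfrac{1}{2}(e+\ep)\bmod 2$ --- exactly the assertion $2i\equiv e+\ep\bmod 4$.

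The main obstacle is this last computation. One must identify the correct $\rho$-adapted basis at the degenerate surface and determine each winding number, and these depend delicately on the local picture at the order-four zero, on the twist parameter $t$, and on the way $\rho$ permutes and reglues the cylinders; keeping the mod-$2$ bookkeeping honest across all cylinders while respecting the Prym symmetry is where the real effort lies. Once the indices are fixed the reduction to $e+\ep\bmod 4$ is a finite calculation. To conclude, I would note that a $\Z/2$-valued invariant that is constant on each component and takes both values must separate the two components bijectively: since Lanneau--Nguyen show that $\W$ has exactly two components for $D\equiv 1\bmod 8$ \cite{lanneaunguyen}, it suffices to exhibit prototypes realising both residues, after which we are free to name the components $\W^1,\W^2$ so that $2i\equiv e+\ep\bmod 4$.
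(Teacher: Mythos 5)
The decisive flaw is structural: your proposed invariant never sees the real multiplication. Everything you build --- the subspace $\H^1(X;\Z/2\Z)^-$ (note that mod $2$ the anti-invariant and invariant parts coincide) and the winding-number quadratic refinement --- depends only on the triple $(X,\omega,\rho)$, not on $\O_D$ or on $\iota$. Your own continuity-plus-discreteness argument therefore makes any Arf-type quantity you extract locally constant not just on $\W$ but on the entire Prym locus of such triples in $\Hcal(4)$; and that locus is connected, since $\omega\mapsto\omega^2$ identifies it (via the canonical double cover) with the genus-one stratum $\mathcal{Q}(3,-1,-1,-1)$ of quadratic differentials on $X/\rho$, which is connected. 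Hence your invariant takes the same value on both components of $\W$ for every $D$ and cannot compute $e+\ep\bmod 4$ --- this is exactly the mechanism by which you correctly dismissed the Kontsevich--Zorich parity, pushed one level up. Had you carried out the ``finite calculation'' at the cusps (which you only state as an expectation), it would have returned the same residue for every prototype. The degeneracy you flag is likewise never resolved --- mod $2$ the intersection form on the anti-invariant part has a two-dimensional radical, so an Arf invariant exists only after checking that $q$ vanishes on that radical and descends --- but even granting this, the connectedness objection kills the approach. Any separating invariant must involve $\O_D$; this is precisely the point of the remark of Lanneau--Nguyen cited in the paper's introduction, that the relevant subspace is of a different nature than McMullen's.

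The paper's invariant does involve $\O_D$: for odd $D$ one sets $T=\iota\bigl(\tfrac{1+\sqrt{D}}{2}\bigr)$ and restricts the \emph{intersection pairing} --- no quadratic refinement at all --- to $\Im T\subseteq\H_1(X,\Z/2\Z)^-$, a subspace that genuinely moves with the eigenform structure. Concretely, the eigenform condition $\int_{T\gamma}\omega=T\int_\gamma\omega$ determines the matrix of $T$ in Lanneau--Nguyen's cylinder basis of each prototype; self-adjointness gives $\langle T\gamma,T\delta\rangle=\langle T^2\gamma,\delta\rangle$, and since the $(1,2)$-polarisation leaves only one odd basis pairing mod $2$, everything reduces to a single entry of $T^2$ mod $2$, yielding \autoref{spininvariant}: the restricted pairing vanishes mod $2$ iff $e+\ep\equiv 0\bmod 4$. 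Your concluding step --- local constancy of a discrete invariant along $\W$, the count of exactly two components for $D\equiv 1\bmod 8$ from \cite{lanneaunguyen}, realising both residues by explicit prototypes, and naming $\W^1,\W^2$ accordingly --- does coincide with the paper's; it is the source of the invariant that must be replaced.
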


In \autoref{componentsandspin}, we prove \autoref{spin} essentially using topological arguments. More precisely, we analyse the intersection pairing on a certain intrinsic subspace of homology with $\Z/2\Z$ coefficients. This is similar to the approach of \cite{mcmullenspin} where the Arf invariant of a quadratic form that was associated to the flat structure was analysed on such a subspace, but the nature of these subspaces is different (cf. \cite[Remark 2.9]{lanneaunguyen}). Note also that in genus $3$ the two components lie on disjoint Hilbert Modular Surfaces (cf. \cite[Proposition 4.6]{moellerprym}) and that the $(1,2)$-polarisation of the Prym variety plays a special role in this case, essentially yielding a much more compact formula (cf. \cite[Theorem 5.3]{mcmullenspin}).

In \autoref{galoisaction}, we proceed to give an explicit description of the Galois action on Lanneau and Nguyen's cusp prototypes (\autoref{galoiscusps}) and combine this with \autoref{spin} to show that Galois-conjugate cusps always lie on different components of $\W$, thus proving \autoref{comphomeo}.

\addsubsection{Acknowledgements}
I am very grateful to my advisor, Martin Möller, for many helpful discussions and comments. I also thank the anonymous referee for many valuable suggestions, in particular regarding \autoref{3cylinders}. I thank \cite{PARI2} for computational assistance.

\section{Cusp Prototypes}
\label{cusps}

A \emph{flat surface} is a pair $(X,\omega)$ where $X$ is a compact Riemann surface of genus~$g$ and $\omega\in\H^0(X,\omega_X)$ is a holomorphic $1$-form on $X$. Note that $X$ obtains a flat structure away from the zeros of $\omega$ via integrating $\omega$ and affine shearing of this flat structure gives an action of $\GL_2^+(\R)$.
A \emph{Teichmüller curve} is a $\GL_2^+(\R)$ orbit of a flat surface that projects to an algebraic curve inside the moduli space $\M_g$. See e.g. \cite{parkcity} for background on Teichmüller curves and flat surfaces. Not many families of primitive Teichmüller curves are known; McMullen constructed families in low genera by requiring a factor of the Jacobian of $X$ to admit real multiplication, the (Prym-)Weierstraß curves. We briefly review the construction in genus~$3$, the case with which we are concerned.


\addsubsection{Real Multiplication}
Let $D\equiv 0,1\mod 4$ be a (positive) non-square discriminant and denote by $\O_D$ the corresponding order in the real quadratic number field $\Q(\sqrt{D})$. Let $X$ be a genus $3$ curve and $\rho$ an involution with $X/\rho$ of genus $1$. Then we say that $(X,\rho)$ \emph{admits real multiplication by $\O_D$} if there exists an injective ring homomorphism $\iota\colon\O_D\rightarrow\End\H_1(X,\Z)^-$, such that
\begin{itemize}
\item every endomorphism $\iota(s)$ is self-adjoint with respect to the intersection pairing on $\H_1$, and
\item $\iota$ cannot be extended to any $\O_{D'}\supset\O_D$.
\end{itemize}
In other words, the $\rho$-anti-invariant part $\H_1(X,\Z)^-$ of the homology admits a symplectic $\O_D$-module structure and $\O_D$ is maximal in this respect.

Note that here and in the entire paper, we explicitly exclude the case that $D=d^2$ is a square.


\addsubsection{Prym-Weierstraß Curves}
Denote by $\W$ the space of genus $3$ flat surfaces $(X,\omega,\rho,\iota)$ with an involution $\rho$ that admit real multiplication $\iota$ as above and where additionally $\omega$ has a single ($4$-fold) zero, is $\rho$-anti-invariant, and is an eigenform for the induced action of $\O_D$ on $\H^0(X,\omega_X)$. McMullen \cite{mcmullenprym} showed that $\W$ is a union of Teichmüller curves, the \emph{genus $3$ Prym-Weierstraß} or \emph{Prym-Teichmüller curves of discriminant $D$}. Prym-Weierstraß curves have been studied intensely, see e.g. \cite{mcmullenprym}, \cite{moellerprym}, \cite{lanneaunguyen} and \cite{TTZ}. Note, in particular, that $\W$ is empty for $D\equiv 5\mod 8$.

Again, we note that we explicitly exclude the case that $D=d^2$ is a square, see \cite[Appendix B]{lanneaunguyen} for some results in this case.


\addsubsection{Cusps} Recall that a Teichmüller curve $\CC$ is never compact. We describe the cusps first in the terminology of flat surfaces. Let $(X,\omega)$ be a flat surface generating $\CC$ and consider a direction $v\in\P^1(\R)$. Recall that  a geodesic segment is said to be a \emph{saddle connection} if its endpoints are (not necessarily distinct) zeros of $\omega$ and the direction $v$ is said to be \emph{periodic} if all geodesics in direction $v$ are either closed or saddle connections. We say that a \emph{cylinder} is a maximal union of homotopic geodesics on $(X,\omega)$ and any closed geodesic inside a cylinder is a \emph{core curve}. The length of a core curve is the \emph{width} of the cylinder. A cylinder is called \emph{simple} if each boundary consists of a simple saddle connection. The cusps of $\CC$ are in one-to-one correspondence with the parabolic \emph{cylinder decompositions} on $(X,\omega)$, see e.g. \cite[\S 4]{mcmullenspin} or \cite[\S 5.4]{parkcity}.


\addsubsection{Prototypes} To describe the cusps of $\W$, Lanneau and Nguyen introduce prototypes that encode the cylinder decompositions \cite[\S 3,4 and C]{lanneaunguyen}. We briefly summarise the results we need.

The following result is a slight refinement of \cite[Proposition 3.2]{lanneaunguyen}.
\begin{lemma}\label{3cylinders}
Given $D$ non-square and a point $(X,\omega)$ on $\W$, any periodic direction decomposes $(X,\omega)$ into three cylinders. 
\end{lemma}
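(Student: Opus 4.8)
The plan is to pin down the number of cylinders by playing three independent constraints against each other: a fixed-point count for the Prym involution $\rho$, an upper bound coming from the stratum, and a lower bound coming from the eigenform condition. Throughout, note that on a Teichmüller curve generator a periodic direction is completely periodic, so the surface decomposes fully into horizontal cylinders. First I would record the local form of $\rho$: since $\omega$ is $\rho$-anti-invariant, in the flat coordinate $z=\int\omega$ the map $\rho$ is $z\mapsto -z+c$, that is, a rotation by $\pi$. As $X/\rho$ has genus $1$, Riemann--Hurwitz gives $\rho$ exactly four fixed points, one of which is the unique $4$-fold zero of $\omega$ and three of which are regular. Because $\rho$ preserves the horizontal direction, it permutes the cylinders and the horizontal saddle connections.

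Next I would locate the four fixed points relative to a periodic decomposition. A $\rho$-invariant cylinder is sent to itself with top and bottom interchanged, so $\rho$ restricts to $z\mapsto -z+c$ on it and contributes exactly two fixed points, both on the core curve; a $\rho$-invariant saddle connection contributes one fixed point at its midpoint; and the zero contributes one. Writing $a$ for the number of invariant cylinders, $b$ for the number of $\rho$-swapped pairs, and $m$ for the number of invariant saddle connections, and observing that no other configuration can carry a fixed point, this bookkeeping yields $2a+m=3$. In particular $a\in\{0,1\}$, and the total number of cylinders is $n=a+2b$.

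For the upper bound I would use that the disjoint core curves are isotropic in $\H_1(X,\R)$, whose isotropic subspaces have dimension at most $g=3$ in the single-zero stratum $\Hcal(4)$; this is the bound $n\le 3$ already implicit in \cite[Proposition 3.2]{lanneaunguyen}. For the matching lower bound I would pass to the Prym part $\H_1(X,\R)^-$, a four-dimensional symplectic $\O_D\otimes\R$-module on which $\omega$ is an eigenform. Each invariant cylinder and each swapped pair furnishes one anti-invariant core class, so there are $a+b$ of them; they are isotropic, their span is stable under the self-adjoint real multiplication, and the eigenform condition forces the horizontal periods of $\omega$ to cut out a full $2$-dimensional Lagrangian. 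Granting $a+b\ge 2$, the three constraints combine at once: $a=0$ forces $b\ge 2$ and hence $n\ge 4$, contradicting $n\le 3$; so $a=1$, whence $b\ge 1$ and $1+2b\le 3$ give $b=1$ and $n=3$.

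The main obstacle is the lower bound, namely showing that the anti-invariant core classes span the full $2$-dimensional Lagrangian rather than a line. This is precisely where the real-multiplication eigenform hypothesis must enter, as opposed to mere membership in the bare stratum $\Hcal(4)$: a single invariant cylinder, or two $\rho$-swapped cylinders, satisfies the stratum bound and (in the first case) the fixed-point count, yet fails to generate enough homology. I expect this to follow from McMullen's description of completely periodic directions for Prym eigenforms, identifying the span of the horizontal core curves in $\H_1(X,\R)^-$ with the kernel of the imaginary-period functional of the eigenform. The fixed-point count is then exactly what is needed to exclude the remaining borderline configuration of two invariant cylinders, which passes both the stratum and the Lagrangian bounds but violates $2a+m=3$.
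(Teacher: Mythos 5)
Your Riemann--Hurwitz bookkeeping is correct and, combined with the stratum bound $n\le g+s-1=3$ for $\Hcal(4)$, it cleanly re-derives the trichotomy of \cite[Proposition 3.2]{lanneaunguyen}: either three cylinders ($a=1$, $b=1$), or two cylinders swapped by $\rho$ ($a=0$, $b=1$), or a single $\rho$-invariant cylinder ($a=1$, $b=0$). But that trichotomy is exactly the point at which the paper \emph{starts} (it simply cites Lanneau--Nguyen for it), and the actual content of the lemma --- excluding the one-cylinder and two-swapped-cylinder cases --- is precisely the step you leave open. Your phrase \enquote{Granting $a+b\ge 2$} and the concluding \enquote{I expect this to follow from McMullen's description} concede that the lower bound is asserted rather than proved, so the proposal has a genuine gap at its load-bearing step. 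Moreover, the mechanism you sketch for closing it is not right as stated: the kernel of the imaginary-period functional $\gamma\mapsto\Im\int_\gamma\omega$ on the four-dimensional space $\H_1(X,\R)^-$ is a hyperplane, hence three-dimensional, not the two-dimensional Lagrangian you want the core classes to span; and it is not clear why the span of the core classes should be stable under the real multiplication without invoking further structural results. A single invariant cylinder, or a swapped pair, really does satisfy every constraint you have actually established.

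The paper closes this gap by a different and much shorter mechanism: in the one-cylinder and two-swapped-cylinder cases all ratios of cylinder circumferences equal $1$, while \cite[Theorem 1.9]{wrightcylinders} says that adjoining these ratios to $\Q$ yields the trace field of $(X,\omega)$, which is $\Q(\sqrt{D})$ by \cite[Corollary 3.6]{mcmullenprym}; since $D$ is not a square this is a contradiction, and only the three-cylinder case survives. Note that this is where the hypothesis \enquote{$D$ non-square} enters --- your sketch never uses it, which is itself a warning sign, since for square $D$ the excluded configurations can occur. If you want to salvage your argument, replace the heuristic about the imaginary-period kernel with Wright's circumference-ratio theorem (or with his cylinder deformation theorem, which gives the $\O_D$-stability of the span of core classes that your Lagrangian argument silently needs); with that input your fixed-point count becomes a self-contained alternative derivation of the trichotomy, but the exclusion step would then coincide with the paper's.
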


\begin{proof}
By \cite[Proposition 3.2]{lanneaunguyen}, any periodic direction decomposes $(X,\omega)$ into either three cylinders, or two cylinders that are permuted by the Prym involution or one cylinder (that is fixed by the Prym involution). Obviously, in the last two cases, the ratio of cylinder circumferences is $1$. However, \cite[Theorem 1.9]{wrightcylinders} asserts that adjoining the ratio of cylinder circumferences to $\Q$ gives the trace field of $(X,\omega)$, which is $\Q(\sqrt{D})$ (cf. \cite[Corollary 3.6]{mcmullenprym}), a contradiction.
\end{proof}

\begin{rem}
\autoref{3cylinders} can be seen as a converse to \cite[Corollary 3.4]{lanneaunguyen}.
\end{rem}

Following \cite{lanneaunguyen}, after rescaling, applying Dehn-twists, and normalising so that the horizontal direction is periodic, this decomposition may be encoded in a combinatorial prototype
\[P_D=[w,h,t,e,\ep]\in\Z^5\]
subject to the following conditions:
\[\begin{cases}
D=e^2+8wh,\ \ep=\pm1,\ w,h>0,\\
w>\frac{\lambda}{2},\ 0\leq t<\gcd(w,h),\ \gcd(w,h,t,e)=1,
\end{cases}\]
where we set
\begin{equation}\label{lambda}
\lambda\coloneqq\lambda_P\coloneqq\frac{e+\sqrt{D}}{2}.
\end{equation}
Moreover, if $\ep=1$, the stronger condition $w>\lambda$ is required.

\begin{figure}
\centering
\includegraphics[width=\textwidth]{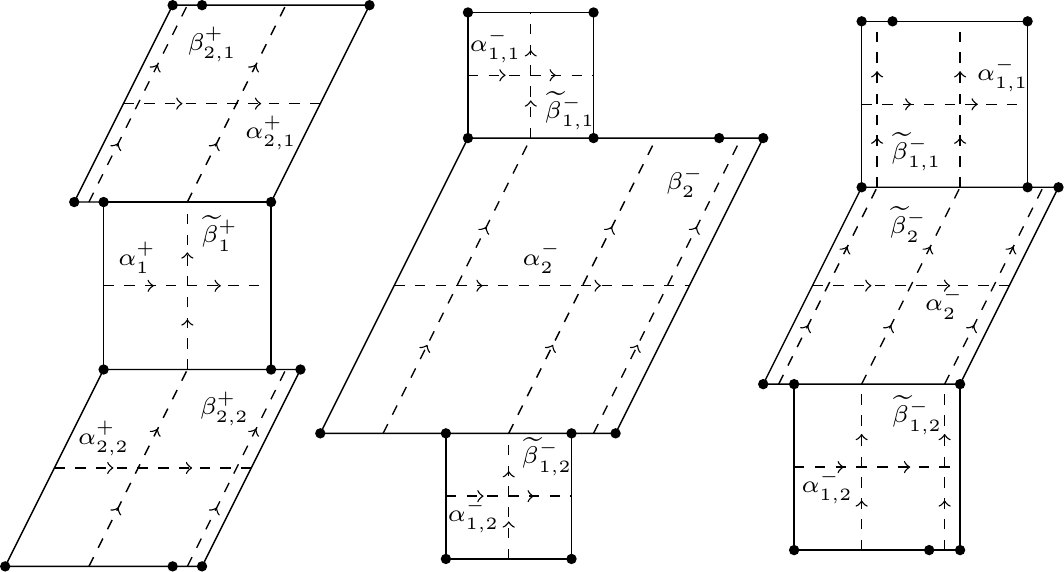}
\caption{Prototypes of geometric type $A+$, $A-$ and $B$. Observe that all $\alpha_i$ are drawn in a horizontal direction, the $\beta_i$ are drawn vertical. We set $\alpha_i=\alpha_{i,1}+\alpha_{i,2}$ and $\beta_i=\beta_{i,1}+\beta_{i,2}$ when appropriate, and furthermore, for the $A+$ prototype, $\beta^+_1=\smash{\widetilde{\beta}}^+_1-\beta^+_2$, for the $A-$ prototype, $\beta^+_{1,i}=\smash{\widetilde{\beta}}^-_{1,i}-\beta^-_2$, and, for the $B$ prototype, $\beta^-_2=\smash{\widetilde{\beta}}^-_{1,1}+\smash{\widetilde{\beta}}^-_{1,2}-\smash{\widetilde{\beta}}^-_2$ and $\beta^-_{1,i}=\smash{\widetilde{\beta}}^-_{1,i}-\beta^-_2$. Thus the $\alpha_i$ and $\beta_i$ give symplectic bases whose periods describe the cylinder heights and widths.}
\label{fig:prototypes}
\end{figure}

Conversely, given a combinatorial prototype, we obtain a three-cylinder decomposition into one of the following three geometric types (see \autoref{fig:prototypes}):
\begin{itemize}
\item $A+$: If $\ep=1$ and $\lambda<w$, we obtain a cylinder decomposition with a single (short) simple cylinder of width and height $\lambda$ and two cylinders of width $w$, height $h$ and twist $t$.
\item $A-$: If $\ep=-1$ and $\lambda<w$, we obtain a cylinder decomposition with two (short) simple cylinders of width and height $\nicefrac{\lambda}{2}$ and a third cylinder of width $w$, height $h$ and twist $t$.
\item $B$: If $\ep=-1$ and $\nicefrac{\lambda}{2}<w<\lambda$, we obtain a cylinder decomposition with no simple cylinders but again two short cylinders of width and height $\nicefrac{\lambda}{2}$ and a third cylinder of width $w$, height $h$ and twist $t$.
\end{itemize}
Each geometric prototype corresponds to exactly one cusp of $\W$.


\section{Components and Spin}
\label{componentsandspin}

In analogy to the situtation in genus $2$, Lanneau and Nguyen showed that, for any discriminant $D$, the locus $\W$ has at most two components \cite[Theorem 2.8, 2.10]{lanneaunguyen}. More precisely, $\W$ has two components if and only if $D\equiv 1\mod 8$. In the following, we denote these components by $\W^1$ and $\W^2$. 

The aim of this section is to provide an analogue of McMullen's spin invariant in genus $2$ \cite{mcmullenspin}, i.e. an invariant that determines if a cusp prototype is associated to a cusp on $\W^1$ or $\W^2$.
%
%


To each geometric prototype $P_D=[w,h,t,e,\ep]$, Lanneau and Nguyen associate a basis $\b=\b(P_D)$ of $\H_1(X,\Z)^-$ \enquote{spanning cylinders}, cf. \cite[\S 4]{lanneaunguyen}. We will see that, in fact, the behaviour of the basis will depend only on $\ep$, i.e. geometric type $A-$ and $B$ will not be distinguished. Hence, we denote the bases by 
\[\b^\ep=(\alpha^\ep_1,\alpha^\ep_2,\beta^\ep_1,\beta^\ep_2),\]
where $\alpha_i$ and $\beta_i$ are as in \autoref{fig:prototypes}. In particular, the periods (with respect to $\omega$) are
\begin{equation}\label{periods+}
\int_{\alpha^+_1}\omega=\lambda,\quad\int_{\alpha^+_2}\omega=2w,\quad\int_{\beta^+_1}\omega=\i\lambda,\quad\int_{\beta^+_2}\omega=2t+2\i h
\end{equation}
if $P_D$ is of geometric type $A+$ (i.e. $\ep=1$) and
\begin{equation}\label{periods-}
\int_{\alpha^-_1}\omega=\lambda,\quad\int_{\alpha^-_2}\omega=w,\quad\int_{\beta^-_1}\omega=\i\lambda,\quad\int_{\beta^-_2}\omega=t+\i h
\end{equation}
if $P_D$ is of geometric type $A-$ or $B$ (i.e. $\ep=-1$).

Moreover, the intersection form on $\H_1(X,\Z)^-$ is of type $(1,2)$. Clearly, it is described by the matrices
\begin{equation}\label{intersectionmatrix}
\langle\cdot,\cdot\rangle_{\b^+}=\begin{pmatrix}\phantom{-}0&\phantom{-}0&1&0\\\phantom{-}0&\phantom{-}0&0&2\\-1&\phantom{-}0&0&0\\\phantom{-}0&-2&0&0\end{pmatrix}
\quad\text{and}\quad
\langle\cdot,\cdot\rangle_{\b^-}=\begin{pmatrix}\phantom{-}0&\phantom{-}0&2&0\\\phantom{-}0&\phantom{-}0&0&1\\-2&\phantom{-}0&0&0\\\phantom{-}0&-1&0&0\end{pmatrix}.
\end{equation}
In particular, $\langle\alpha_i,\alpha_j\rangle=\langle\beta_i,\beta_j\rangle=0$ for any $i,j$ and $\langle\alpha_i,\beta_j\rangle$ is nonzero iff $i=j$.


Recall that, for $D\equiv 1\mod 4$, the quadratic order is $\O_D=\Z\oplus T\Z$, where
\[T=\frac{1+\sqrt{D}}{2}.\]
As $(X,\omega)\in\W$ admits real multiplication $\iota$, $\H_1(X,\Z)^-$ is an $\O_D$-module. In particular, for odd $D$, we may view $T$ as an endomorphism $\iota(T)$ on $\H_1(X,\Z)^-$.
We now describe this endomorphism on the cusp prototypes. Note that this calculation essentially appears already in \cite[\S 4]{lanneaunguyen}, but due to differences in notation and for the convenience of the reader, we briefly restate the result.
\begin{lemma}
Let $D$ be an odd discriminant.
Given a prototype $P_D=[w,h,t,e,\ep]$ associated to a flat surface $(X,\omega)$ the endomorphism $\iota(T)$ acts on $\H_1(X,\Z)^-$ in the basis $\b(P_D)=\b^\ep$ by $\iota(T)_{P_D}=\iota(T)^\ep$, where
\begin{equation*}
\iota(T)^+=\begin{pmatrix}\frac{e+1}{2}&2w&0&2t\\h&-\frac{e-1}{2}&-t&0\\0&0&\frac{e+1}{2}&2h\\0&0&w&-\frac{e-1}{2}\end{pmatrix}
\text{and }
\iota(T)^-=\begin{pmatrix}\frac{e+1}{2}&w&0&t\\2h&-\frac{e-1}{2}&-2t&0\\0&0&\frac{e+1}{2}&h\\0&0&2w&-\frac{e-1}{2}\end{pmatrix}.
\end{equation*}
Note that $e$ is odd iff $D$ is odd.
\end{lemma}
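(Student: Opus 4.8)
The plan is to recover the matrix of $\iota(T)$ purely from the eigenform property together with the explicit periods recorded in \eqref{periods+} and \eqref{periods-}, without invoking self-adjointness (which will instead fall out as a consistency check). The one structural input is that, by construction, $\omega$ is an $\O_D$-eigenform: under the real embedding $\sqrt D\mapsto\sqrt D$ — the one for which $\lambda=\int_{\alpha_1}\omega$ is the indicated period — the element $T=\frac{1+\sqrt D}{2}$ acts on $\omega$ as a scalar, so that
\[\int_{\iota(T)\gamma}\omega=T\int_{\gamma}\omega\qquad\text{for all }\gamma\in\H_1(X,\Z)^-.\]
Writing $\Phi(\gamma)=\int_\gamma\omega$, the periods in \eqref{periods+} (resp. \eqref{periods-}) all lie in the $4$-dimensional $\Q$-vector space $V=\Q(\sqrt D)\oplus\i\,\Q(\sqrt D)$, which carries the $\Q$-basis $\{1,\sqrt D,\i,\i\sqrt D\}$, so $\Phi$ restricts to a $\Q$-linear map $\H_1(X,\Q)^-\to V$.

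The first step is to check that $\Phi$ is an isomorphism of $\Q$-vector spaces. Expressing $\Phi(\alpha^\ep_i),\Phi(\beta^\ep_i)$ in the basis $\{1,\sqrt D,\i,\i\sqrt D\}$ and computing the resulting $4\times4$ rational determinant yields a nonzero rational multiple of $wh$ (using $\lambda=\tfrac e2+\tfrac12\sqrt D$ and $w,h>0$); hence $\Phi$ is bijective. Consequently $\iota(T)=\Phi^{-1}\circ m_T\circ\Phi$, where $m_T$ denotes multiplication by the real number $T$ on $V$, and in particular $\iota(T)$ is already determined over $\Q$ by the eigenform relation alone — integrality and self-adjointness are not needed as inputs.

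The second step is to read off the columns. Applying the eigenform relation to each basis vector $\gamma\in\b^\ep$ and writing $\iota(T)\gamma$ in coordinates with (a priori merely rational) coefficients, I equate $\Phi(\iota(T)\gamma)$ with $T\Phi(\gamma)$ inside $V$. The products $T\lambda$, $T\cdot 2w$, $T(2t+2\i h)$, and so on are reduced to the form $a+b\sqrt D+\i(c+d\sqrt D)$ using the identity $\lambda^2=e\lambda+2wh$, which is just $D=e^2+8wh$ rewritten; comparing coefficients against the $\Q$-basis $\{1,\sqrt D,\i,\i\sqrt D\}$ then determines the coordinates of $\iota(T)\gamma$ uniquely. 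For $\ep=+1$ this reproduces $\iota(T)^+$, and the identical computation with the halved $\alpha_2,\beta_2$-periods of \eqref{periods-} reproduces $\iota(T)^-$. The half-integers $\frac{e\pm1}{2}$ that appear are in fact integers precisely because $e$ is odd, i.e. because $D$ is odd.

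The only genuinely delicate point is the normalisation in the first paragraph: one must fix the embedding of $\O_D$ so that $\omega$ has eigenvalue $T$ rather than its conjugate $T'=\frac{1-\sqrt D}{2}$, matching Lanneau and Nguyen's labelling in which $\lambda_P=\frac{e+\sqrt D}{2}$ is the width of the short cylinder; everything downstream is routine linear algebra over $\Q(\sqrt D)$. As a sanity check, the matrices $\iota(T)^\pm$ obtained this way are automatically self-adjoint for the intersection pairings in \eqref{intersectionmatrix} and satisfy $\iota(T)^2-\iota(T)+\frac{1-D}{4}=0$, so they do define the $\O_D$-module structure; but neither fact is required for the derivation.
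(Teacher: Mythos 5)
Your proof is correct and follows essentially the same route as the paper's: both derive the matrices from the eigenform relation $\int_{\iota(T)\gamma}\omega=T\int_\gamma\omega$ applied to the explicit periods \eqref{periods+}--\eqref{periods-}, reduced via $\lambda^2=e\lambda+2wh$. Your only addition is to verify explicitly (via the determinant, a nonzero multiple of $wh$) that the period map is a $\Q$-isomorphism onto $\Q(\sqrt{D})\oplus\i\,\Q(\sqrt{D})$, so that the eigenform relation alone pins down $\iota(T)$ --- a well-definedness point the paper leaves implicit.
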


\begin{proof}
Note first that $T=\lambda-\frac{e-1}{2}$ (cf. \eqref{lambda}) and that any $\gamma\in\H_1(X,\Z)^-$ satisfies
\[\int_{\iota(T)\cdot\gamma}\omega=\int_\gamma\iota(T)^\vee\omega=T\cdot\int_{\gamma}\omega,\]
as $\omega$ is an eigenform.
Now, using the periods of $\b^\pm$ in \eqref{periods+} and \eqref{periods-}, as well as the identities $\lambda^2=e\lambda+2wh$ and $T\cdot\lambda=2wh+\lambda\frac{e+1}{2}$, the representations $\iota(T)^\pm$ are obtained by a straight-forward calculation.
\end{proof}

We are now in a position to describe the restriction of the intersection pairing $\langle\cdot,\cdot\rangle$ to the image of the endomorphism $\iota(T)$ in $\H_1(X,\Z/2\Z)^-$ (for $D$ odd). To ease notation, we will no longer distinguish $T$ and $\iota(T)$, as no confusion can arise.

\begin{prop}\label{spininvariant}
Let $D$ be an odd discriminant and $T$ the endomorphism from above. Let $(X,\omega)$ be the geometric prototype associated to the cusp prototype $P_D=[w,h,t,e,\ep]$. Then
\[\langle\cdot,\cdot\rangle|_{\Im T}\equiv 0\mod 2\iff e+\ep\equiv 0\mod 4,\]
where $\langle\cdot,\cdot\rangle|_{\Im T}$ is the restriction of the intersection pairing on $\H_1(X,\Z)^-$ to the image of $T$.
\end{prop}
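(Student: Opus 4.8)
The plan is to convert the vanishing statement into a single matrix congruence modulo $2$ and then read the answer off from the reductions of the explicit matrices. Since $\Im T$ is the $\Z$-span of the columns of the matrix $T^\ep=\iota(T)^\ep$ representing $T$ in the basis $\b^\ep$, bilinearity of $\langle\cdot,\cdot\rangle$ shows that $\langle\cdot,\cdot\rangle|_{\Im T}\equiv 0\mod 2$ holds if and only if $\langle T\gamma,T\gamma'\rangle\equiv 0\mod 2$ for all basis vectors $\gamma,\gamma'\in\b^\ep$. Writing $J^\ep$ for the Gram matrix of the intersection form from \eqref{intersectionmatrix}, this is precisely the condition
\[(T^\ep)^{\top} J^\ep\, T^\ep\equiv 0\mod 2,\]
since the Gram matrix of the restricted form with respect to the spanning set $\{T\gamma\}$ is exactly $(T^\ep)^{\top} J^\ep T^\ep$. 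As reduction modulo $2$ is a ring homomorphism, it then suffices to compute this product after reducing each factor.

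First I would reduce the three factors. Because $D$ is odd, $e$ is odd, so $a\coloneqq\tfrac{e+1}{2}$ and $b\coloneqq\tfrac{e-1}{2}$ are integers with $a-b=1$; hence exactly one of them is even, and $a\equiv 0\mod 2\iff e\equiv 3\mod 4$ while $b\equiv 0\mod 2\iff e\equiv 1\mod 4$. The key observation is that almost every off-diagonal entry of $T^\pm$, as well as the entries of $J^\pm$ coming from the $(1,2)$-polarisation, carries a factor of $2$ and so dies modulo $2$. Concretely, $\overline{J^+}$ retains a single symmetric pair of $1$'s in positions $(1,3),(3,1)$, whereas $\overline{J^-}$ retains such a pair in the complementary positions $(2,4),(4,2)$; and $\overline{T^\pm}$ become sparse matrices whose only surviving entries are drawn from $a,b,w,h,t$.

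Next I would carry out the two products $(\overline{T^\ep})^{\top}\,\overline{J^\ep}\,\overline{T^\ep}$ directly. In each case antisymmetry of the form forces the diagonal to vanish, and a short computation leaves a single nonzero off-diagonal entry up to symmetry: for $\ep=+1$ this entry equals $a^2\equiv a$, and for $\ep=-1$ it equals $b^2\equiv b$. Thus the product vanishes modulo $2$ exactly when $a\equiv 0$ (resp.\ $b\equiv 0$). Translating via the parity facts above, for $\ep=+1$ this is $e\equiv 3\mod 4$, i.e.\ $e+1\equiv 0\mod 4$, and for $\ep=-1$ it is $e\equiv 1\mod 4$, i.e.\ $e-1\equiv 0\mod 4$; both cases are uniformly summarised as $e+\ep\equiv 0\mod 4$.

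I do not expect a genuine obstacle: the statement reduces to a finite $\Z/2\Z$-linear-algebra computation, and the only real issue is bookkeeping. Note that the ``$2$'' of the polarisation sits in different positions of $J^+$ and $J^-$, and the factors of $2$ decorate different entries of $T^+$ and $T^-$, yet both products collapse to the same clean criterion. Verifying that this matching genuinely occurs — that one obtains precisely $a$ in the first case and $b$ in the second, rather than, say, $a$ versus $b+1$ — is the one place where a sign or parity slip would be easy to make, so I would carefully recheck the surviving entry and its position in each of the two products.
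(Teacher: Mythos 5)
Your proposal is correct and follows essentially the same route as the paper: both reduce the statement to a finite computation mod $2$ with the explicit matrices $\iota(T)^\pm$ and the Gram matrices from \eqref{intersectionmatrix}, and both land on the parity of $\frac{e+1}{2}$ (resp.\ $\frac{e-1}{2}$) as the criterion. The only organisational difference is that the paper invokes self-adjointness of $T$ to write $\langle T\gamma,T\delta\rangle=\langle T^2\gamma,\delta\rangle$ and then checks a single entry of $(T^\pm)^2$ (the $1,1$ entry for $\ep=1$, the $2,2$ entry for $\ep=-1$), whereas you compute the full triple product $(T^\ep)^{\top}J^\ep T^\ep\bmod 2$ directly — a slightly longer multiplication that, in exchange, never needs the self-adjointness hypothesis.
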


\begin{proof}
We begin by observing that, as $T$ is self-adjoint by the condition on real multiplication, we have $\langle T\gamma,T\delta\rangle=\langle T^2\gamma,\delta\rangle$ for any $\gamma,\delta\in\H_1(X,\Z)^-$. Moreover, by \eqref{intersectionmatrix}, any two elements $b_1,b_2\in\b^\ep$ satisfy
\[\langle b_1,b_2\rangle\not\equiv 0\mod 2\iff \{b_1,b_2\}=\begin{cases}\{\alpha^+_1,\beta^+_1\},&\text{if $\ep=1$,}\\\{\alpha^-_2,\beta^-_2\},&\text{if $\ep=-1$.}\end{cases}\]
Therefore, by checking mod $2$ the $1,1$ entry of $(T^+)^2$ and the $2,2$ entry of $(T^-)^2$, we find (using $D=e^2+8wh$) that
\[\langle\cdot,\cdot\rangle_\pm|_{\Im T^\pm}\equiv 0\mod 2\iff e\pm1=e+\ep\equiv 0\mod 4,\]
as claimed.
\end{proof}

\begin{rem}
Note that Lanneau and Nguyen use a similar idea (restriction of the intersection pairing to the image of an operator $\mod 2$) to show that there are in fact two distinct components of $\W$ for $D\equiv 1\mod 8$ \cite[Theorem 6.1]{lanneaunguyen}. However, they use a different operator $T=T(P)$ for every prototype and this does not seem a feasible invariant.
\end{rem}

\begin{proof}[Proof of \autoref{spin}]
Let $D$ be an odd discriminant. We denote by $\X\rightarrow\W$ the universal family over the Teichmüller curve $\W$, see \cite[\S 1.4]{VHS}. By definition of $\W$, each fibre $\X_t$ has an involution $\rho_t$ and the real multiplication gives endomorphisms $T_t$ of $\H_1(\X_t,\Z)^-$, allowing us to consider the restriction of the intersection form $\langle\cdot,\cdot\rangle_t$ to the image of $T_t$ and take $\Z/2\Z$ coefficients. In particular, the map 
\[t\mapsto\langle\cdot,\cdot\rangle|_{\Im T_t}\mod 2\]
is continuous and as the range (the space of bilinear operators on an $\mathbb{F}_2$ vector space) is discrete, it is locally constant. Now, \autoref{spininvariant} asserts that two cusp prototypes $P_D$, $P_D'$ are associated to cusps on the same component if and only if $e+\ep\equiv e'+\ep'\mod 4$ and, as any such $e$ must be odd, this yields the claim.
\end{proof}


\section{The Galois Action on the Components}
\label{galoisaction}

The aim of this section is to prove \autoref{comphomeo}. The idea is to show that, for $D\equiv 1\mod 8$, the two components of $\W$ are in fact Galois-conjugate in analogy to the situation in genus $2$ (cf. \cite[Theorem 3.3]{BMFuchs}).

To achieve this, we first describe algebraic models of the stable curves associated to the cusps of $\W$ and then describe the Galois-action on these curves explicitly.

%

\addsubsection{Stable Curves} While a Teichmüller curve $\CC$ is never compact, it admits a smooth completion $\overline{\CC}$. Moreover, after passing to a finite cover, we may pull back the universal family over $\M_g$ to $\CC$, thus obtaining a family of curves, which we -- by abuse of notation -- also denote by $\X\rightarrow\CC$ and which extends to a family of stable curves $\overline{\X}\rightarrow\overline{\CC}$, cf. \cite[\S 1.4]{VHS}. 

Much of the geometry of the stable fibres is given by the flat structure. By the above, given a flat surface $(X,\omega)$ on $\X$ together with a periodic direction $v$, we may associate a cusp $(X_\infty,\omega_\infty)$ to $(X,\omega,v)$, where $X_\infty$ is a stable curve and $\omega_\infty$ is a stable differential on $X_\infty$, see e.g. \cite[\S 2.5 and \S 5.4]{parkcity}. In particular, $X_\infty$ is obtained from~$X$ topologically by contracting the core curves of cylinders and $\omega_\infty$ has poles with residue equal to the cylinder widths at the nodes of $X_\infty$.

\begin{lemma}\label{stablefibres}
Let $c\in\overline{\W}\setminus\W$ be a point such that the fibre $X_\infty=\overline{\X}_c$ is singular. Then $X_\infty$ is a trinodal curve, i.e. $X_\infty$ is a projective line with three pairs of points identified.
\end{lemma}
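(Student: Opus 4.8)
The plan is to read the stable curve directly off the three-cylinder decomposition supplied by \autoref{3cylinders}, using the single-zero condition built into the definition of $\W$. The point $c\in\overline{\W}\setminus\W$ with singular fibre corresponds to a periodic direction on the generating flat surface $(X,\omega)$, and, as recalled above, $X_\infty$ is obtained from $X$ topologically by contracting the core curves of the cylinders in this direction. By \autoref{3cylinders} there are exactly three such cylinders, so $X_\infty$ carries exactly three nodes. It then remains to identify the normalisation $\widetilde{X}_\infty$ and to check that it is a single $\mathbb{P}^1$ meeting these nodes in six distinct points.

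First I would study the complement of the three core curves in $X$. Cutting $X$ along these three disjoint curves produces a surface $X'$ with exactly six boundary circles, two for each cylinder. The crucial observation is that $X'$ is connected: since $\omega$ has a single zero, every saddle connection emanates from this common point, so the union of the cylinder boundaries, onto which the six half-open cylinders deformation retract, is connected; hence so is $X'$. This is the step in which the single-zero hypothesis is indispensable, and I expect it to be the main point requiring care.

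Granting connectedness, the genus of $X'$ follows from an Euler characteristic count. Cutting along simple closed curves leaves the Euler characteristic unchanged, so $\chi(X')=\chi(X)=-4$; as $X'$ is connected with six boundary components, the relation $\chi=2-2g'-b$ forces $g'=0$. Passing to the normalisation $\widetilde{X}_\infty$ amounts to capping each of the six boundary circles with a disc, each cut core curve becoming a smooth point lying over one of the nodes. This yields a closed connected surface of genus $0$, so $\widetilde{X}_\infty\cong\mathbb{P}^1$.

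Finally, $X_\infty$ is recovered from $\widetilde{X}_\infty$ by re-identifying these six points in the three pairs corresponding to the three contracted cylinders, exhibiting $X_\infty$ as a projective line with three pairs of points identified, i.e.\ a trinodal curve. As a consistency check one may run through the three geometric types $A+$, $A-$ and $B$ of \autoref{fig:prototypes} and read off in each case that the three cylinders pinch to three (non-separating) nodes on a single rational component.
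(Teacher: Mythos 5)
Your proof is correct, and its skeleton matches the paper's: invoke \autoref{3cylinders} to get exactly three cylinders, use the single zero of $\omega$ to force irreducibility, and conclude that contracting the three core curves produces a rational curve with three nodes. Where you diverge is in how the two key steps are discharged. The paper handles irreducibility by citing \cite[Corollary~5.11]{parkcity} (every irreducible component of the stable curve must carry a zero of the stable differential, and there is only one zero), and then gets geometric genus $0$ by noting that the three core curves are homologically independent, which is exactly equivalent to the connectedness of their complement. You instead prove connectedness directly from the flat geometry --- all saddle connections emanate from the unique zero, so the spine of cylinder boundaries is connected and each half-cylinder retracts onto a piece of it containing the zero --- and then extract the genus from the Euler characteristic count $\chi(X')=\chi(X)=-4$ with six boundary circles, giving $g'=0$ and normalisation $\mathbb{P}^1$. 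Both computations are sound (cutting along circles preserves $\chi$, and each maximal cylinder's boundary consists of saddle connections through the unique zero, so no boundary component can avoid it). Your version buys self-containedness and makes explicit the connectedness point that the paper's citation and its parenthetical ``homologically independent!'' leave implicit; the paper's version is shorter by outsourcing precisely that step to the literature.
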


\begin{proof}
This follows immediately from \cite[Corollary 5.11]{parkcity}: let $(X_\infty,\omega_\infty)$ be the stable flat surface associated to $c$. Then, as every component of $X_\infty$ must contain a zero of $\omega_\infty$, the stable curve $X_\infty$ is irreducible. Moreover, $(X_\infty,\omega_\infty)$ is obtained by contracting the core curves of a cylinder decomposition on some $(X,\omega)\in\W$. But by \autoref{3cylinders}, any such $(X,\omega)$ decomposes into three cylinders, hence $X_\infty$ is obtained topologically by contracting three (homologically independent!) curves on a genus $3$ Riemann surface and therefore has geometric genus $0$ and three nodes. 
\end{proof}

Using the prototypes of \cite{lanneaunguyen} from \autoref{cusps}, we can describe the singular fibres of $\overline{\W}$ more explicitly, in the spirit of \cite[Proposition 3.2]{BMFuchs}.

\begin{prop}\label{stablecoordinates}
The stable curve above the cusp associated to the combinatorial prototype $[w,h,t,e,\ep]$ may 
be normalised by a projective line with six marked points: $\pm1,\pm x_1$, and $\pm x_3$, where
\[x_1=-s-\sqrt{\frac{1-s^2}{3}}\quad\text{and}\quad x_3=-s+\sqrt{\frac{1-s^2}{3}}\quad\text{for}\quad s=\begin{dcases}\frac{e+\sqrt{D}}{4w},&\text{if $\ep=1$,}\\\frac{2w}{e+\sqrt{D}},&\text{if $\ep=-1$,}\end{dcases}\]
and the pairs of points $(+1,-1)$, $(x_1,-x_3)$, and $(x_3,-x_1)$ are identified in the stable model.

In particular, 
the absolute value of $s$ uniquely determines the stable fibres.
\end{prop}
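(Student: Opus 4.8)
The plan is to reconstruct $X_\infty$ from three inputs: its topological type (\autoref{stablefibres}), the Prym involution $\rho$, and the residues of the limiting stable differential $\omega_\infty$, which by the discussion preceding \autoref{stablefibres} equal the cylinder widths at the nodes.

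First I would choose coordinates. By \autoref{stablefibres} the normalisation of $X_\infty$ is a $\P^1$, and $X_\infty$ is obtained by gluing three pairs of points. The involution $\rho$ descends to $X_\infty$ and lifts to an order-two automorphism of $\P^1$, which in a suitable coordinate $z$ is $z\mapsto -z$ with fixed points $0$ and $\infty$; anti-invariance of $\omega$ passes to the limit as $\rho^*\omega_\infty=-\omega_\infty$. No node can have both branches $\rho$-fixed (this would force it to glue $0$ to $\infty$, but these are a regular point and the zero of $\omega_\infty$), so each node either has its branches interchanged or is swapped with another node. Reading off the $\rho$-action on the three-cylinder decomposition from \cite{lanneaunguyen} (cf.\ \autoref{fig:prototypes}) -- $\rho$ fixes the unique cylinder whose width is not repeated and interchanges the remaining pair -- shows that exactly one node is fixed and the other two are swapped. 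After rescaling I normalise the fixed node to glue $+1$ to $-1$, and write the swapped nodes as gluing $x_1$ to $-x_3$ and $x_3$ to $-x_1$.

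Next I would write $\omega_\infty$ explicitly. It has a single $4$-fold zero -- the limit of the zero of $\omega$ -- sitting at a $\rho$-fixed smooth point, which after applying $z\mapsto 1/z$ if necessary I place at $\infty$, and simple poles at the six node preimages with residues equal up to sign to the cylinder widths. The divisor count $4-6=-2$ then forces
\[\omega_\infty=\frac{c\,\d z}{(z^2-1)(z^2-x_1^2)(z^2-x_3^2)}\]
for some constant $c$, and anti-invariance holds automatically since the integrand is even.

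It remains to determine $x_1,x_3$. Imposing that the residues on the two branches of each glued pair cancel is automatic for the fixed node, but for the two swapped nodes gives the single relation $x_1^2-x_1x_3+x_3^2=1$; writing $s\coloneqq-\tfrac12(x_1+x_3)$, this is exactly equivalent to $x_1,x_3=-s\mp\sqrt{(1-s^2)/3}$. To pin down $s$ I would compute the ratio of the residue at $+1$ to the residue at $x_1$, which after using the relation above collapses to exactly $2s$. Matching this to the widths prescribed by the prototype finishes the proof: for $\ep=1$ the fixed (simple) cylinder has width $\lambda$ and the swapped pair width $w$, so $2\abs{s}=\lambda/w$, whereas for $\ep=-1$ the fixed cylinder has width $w$ and the swapped (short) pair width $\nicefrac{\lambda}{2}$, so $2\abs{s}=\nicefrac{2w}{\lambda}$; substituting $\lambda=(e+\sqrt{D})/2$ yields the two stated values of $s$. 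Finally, the configuration $\{\pm1,\pm x_1,\pm x_3\}$ depends only on $\abs{s}$ once the coordinate symmetries $z\mapsto -z$ and $z\mapsto 1/z$ are taken into account, which gives the last assertion.

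The step I expect to be the main obstacle is the geometric bookkeeping rather than the algebra: justifying the exact zero and pole data of $\omega_\infty$ under degeneration and, above all, correctly matching each node to the cylinder it arose from. Identifying which cylinder is $\rho$-fixed and which are swapped -- and doing so consistently for both signs of $\ep$ -- is precisely what produces the two different formulas for $s$; the subsequent residue computation, by contrast, simplifies remarkably.
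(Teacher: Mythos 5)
Your proposal is correct and follows essentially the same route as the paper: normalise so that the Prym involution acts as $z\mapsto -z$ with the $4$-fold zero of $\omega_\infty$ at $\infty$ and the fixed node glued along $\pm 1$, identify the residues with the cylinder widths ($r_2=\lambda$, $r_1=r_3=w$ for $\ep=1$; $r_2=w$, $r_1=r_3=\nicefrac{\lambda}{2}$ for $\ep=-1$), and solve for the marked points. The only (cosmetic) difference is that you derive the constraint $x_1^2-x_1x_3+x_3^2=1$ and the ratio $2s$ by local residue cancellation at the nodes plus a degree count, whereas the paper compares coefficients in the global partial-fraction (cross-ratio) identity \eqref{crossratio} -- both computations check out and yield identical formulas.
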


\begin{proof}
By \autoref{stablefibres}, the normalisation of the stable curve $X_\infty$ associated to a cusp of $\W$ is a projective line with three pairs of marked points which we denote by $x_1,y_1,x_2,y_2,x_3,y_3$. 

Now, the stable differential $\omega_\infty$ has poles at the nodes of $X_\infty$ and the residues at each node must add up to zero, i.e. we have the crossratio equation
\begin{equation}\label{crossratio}
\omega_\infty=\biggl(\sum_{i=1}^3\frac{r_i}{z-x_i}-\frac{r_i}{z-y_i}\biggr)\d z=\frac{C \d z}{\displaystyle\prod_{i=1}^3 (z-x_i)(z-y_i)},
\end{equation}
for the residues $r_i$, some constant $C$, and after choosing coordinates so that the unique zero of $\omega_\infty$ is at $\infty$.

Moreover, the Prym involution $\rho$ acts on $X_\infty$, hence also on the normalisation, where we choose coordinates so that it acts as $z\mapsto -z$ (fixing the zero at $\infty$) and $x_2=1$. Recall that the stable fibre was obtained topologically by contracting the core curves of the three cylinders and that two cylinders are exchanged by the involution, one is fixed. We therefore find
\[y_1=-x_3,\quad y_2=-x_2=-1,\quad y_3=-x_1,\quad\text{and}\quad r_1=r_3.\]
Comparing coefficients in \eqref{crossratio}, we obtain
\[x_1=-x_3-2s\quad\text{and}\quad x_3=-s\pm\sqrt{\frac{1-s^2}{3}}\quad\text{for}\quad s=\frac{r_2}{2r_1}.\]
Observe that the choice of sign in $x_3$ interchanges the values of $x_1$ and $x_3$ and that $-s$ gives the same set of points.

Now, consider the cusp associated to the prototype $[w,h,t,e,\ep]$. If $\ep=1$, we have $r_1=r_3=w$ and $r_2=\lambda$, while $\ep=-1$ implies $r_1=r_3=\nicefrac{\lambda}{2}$ and $r_2=w$ (cf. \autoref{fig:prototypes}). This determines $s$.

Conversely, $\abs{s}$ determines the points $x_i$. Identifying the points $\pm 1$, $x_1$ and $-x_3$, and $x_3$ and $-x_1$, we obtain a stable curve with three nodes and an involution.
%
\end{proof}

\begin{rem}\label{ssign}
Note that replacing $s$ with $-s$ in \autoref{stablecoordinates} gives the same six points on $\P^1$, i.e. the same stable curve. This ambiguity corresponds to the action of the Prym involution on the stable curve.
\end{rem}

\begin{rem}
Observe that the stable curve does not ``see'' the twist parameter~$t$, as it only depends on the cylinder widths. In particular, cusp prototypes that differ only in their twist parameter cannot be distinguished by the associated stable curves. This motivates the following definition.
\end{rem}

\begin{dfn}
Given a prototype $P=[w,h,t,e,\ep]$, we define the associated \emph{algebraic cusp prototype} as $[w,h,e,\ep]$.
\end{dfn}



\addsubsection{The Galois Action} As Teichmüller curves are rigid, they are defined over a number field \cites{mcmullenrigid}{moellerviehweg}. In particular, 
the absolute Galois group $\Gal(\overline{\Q}/\Q)$ acts on the set of all Teichmüller curves, hence also on the set of cusps of Teichmüller curves. 

Using the algebraic description of the stable curves, we may describe the Galois action on the cusps of $\W$. As this is again independent of the twist parameter $t$, the action is given only on algebraic cusp prototypes. 

\begin{prop}\label{galoiscusps}
Let $P=[w,h,e,\ep]$ be an algebraic cusp prototype, let $\sigma\in\Gal(\overline{\Q}/\Q)$ 
be a Galois-automorphism that maps $\sqrt{D}$ to $-\sqrt{D}$,
and denote by $P^\sigma$ the prototype corresponding to the $\sigma$-conjugate cusp. Then, if $\ep=1$,
\[P^\sigma=%
\begin{cases}
[h,w,e,-\ep],&\text{if $h>\nicefrac{\lambda}{2}$},\\
[w,h,-e,\ep],&\text{if $h<\nicefrac{\lambda}{2}$},\\
\end{cases}\]
and if $\ep=-1$,
\[P^\sigma=%
\begin{cases}
[h,w,e,-\ep],&\text{if $h>\lambda$},\\
[w,h,-e,\ep],&\text{if $h<\lambda$},\\
\end{cases}\]
where $2\lambda=e+\sqrt{D}$, as above.
\end{prop}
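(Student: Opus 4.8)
The plan is to reduce everything to \autoref{stablecoordinates}, which tells us that the stable curve over a cusp is completely determined by $\abs{s}$, with $s\in\Q(\sqrt{D})$. Since the family of stable fibres is a Galois-equivariant invariant of the cusps, the stable curve over the $\sigma$-conjugate cusp is the $\sigma$-image of the original one. Applying $\sigma$ to the explicit model of \autoref{stablecoordinates} fixes the rational data and sends $\sqrt{D}\mapsto-\sqrt{D}$, hence $s\mapsto s^\sigma$; the sign ambiguity in $\sqrt{(1-s^2)/3}$ merely interchanges $x_1^\sigma$ and $x_3^\sigma$ and is immaterial by \autoref{ssign}. Thus the $\sigma$-conjugate cusp carries parameter of absolute value $\abs{s^\sigma}$, and the whole problem reduces to identifying the unique admissible algebraic prototype realising this value.

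First I would compute $s^\sigma$ using $D=e^2+8wh$. For $\ep=1$ we have $s=(e+\sqrt{D})/4w$, so $s^\sigma=(e-\sqrt{D})/4w$ and $\abs{s^\sigma}=(\sqrt{D}-e)/4w$. For $\ep=-1$ we have $s=2w/(e+\sqrt{D})$, and rationalising with $e^2-D=-8wh$ gives $s^\sigma=2w/(e-\sqrt{D})=-(e+\sqrt{D})/4h$, so that $\abs{s^\sigma}=(e+\sqrt{D})/4h$. In both cases $\sqrt{D}>\abs{e}$ keeps these expressions positive.

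Next I would match $\abs{s^\sigma}$ to a prototype. The two natural candidates are exactly the two prototypes appearing in the statement, namely $[h,w,e,-\ep]$ (swap $w,h$ and flip $\ep$) and $[w,h,-e,\ep]$ (negate $e$). Substituting each into the formula for $s$ shows that \emph{both} reproduce $\abs{s^\sigma}$ up to sign, hence give the same stable curve by \autoref{ssign}; the task is therefore to decide which is admissible. Writing out the defining inequality of each candidate ($w'>\lambda'$ when $\ep'=1$, resp. $w'>\lambda'/2$ when $\ep'=-1$) and clearing the square root via $D=e^2+8wh$, the two admissibility conditions turn into complementary linear inequalities: for $\ep=1$ they reduce to $w\lessgtr 2h-e$, equivalently $h\gtrless\lambda/2$, and for $\ep=-1$ to $h\gtrless 2w+e$, equivalently $h\gtrless\lambda$, exactly as claimed. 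Since swapping $w,h$ and negating $e$ preserve both $D$ and $\gcd(w,h,e)$, the selected candidate is a genuine algebraic prototype.

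The main obstacle is the matching step rather than the arithmetic. Because two distinct algebraic prototypes genuinely share the same $\abs{s}$, the stable curve alone does not pin down $P^\sigma$, so one must invoke the prototype inequalities to choose the valid representative; the crux is verifying that these inequalities are \emph{exactly} complementary. Along the way one checks the positivity hypotheses needed when squaring (automatic from $w,h>0$) and excludes the boundary cases $h=\lambda/2$ (for $\ep=1$) and $h=\lambda$ (for $\ep=-1$): each forces $\sqrt{D}\in\Z$, i.e.\ $D$ a square, which is excluded throughout. A secondary point to justify carefully is the first paragraph, namely that $\sigma$ really acts on the cusp by $s\mapsto s^\sigma$ with the square-root sign absorbed by the Prym involution.
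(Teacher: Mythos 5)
Your proposal follows essentially the same route as the paper: reduce to the parameter $s$ of \autoref{stablecoordinates}, compute $s^\sigma$ using $D=e^2+8wh$, and then use the prototype inequality ($w'>\lambda'$ for $\ep'=1$, resp.\ $w'>\nicefrac{\lambda'}{2}$ for $\ep'=-1$) to select the admissible candidate; your complementarity computations ($h\gtrless\nicefrac{\lambda}{2}$ for $\ep=1$, $h\gtrless\lambda$ for $\ep=-1$) and your handling of the sign ambiguity via \autoref{ssign} match the paper's, and your boundary-case check ($h=\nicefrac{\lambda}{2}$ or $h=\lambda$ forcing $D$ square) is even slightly more careful.

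The one step you assert rather than prove is exhaustiveness: you posit $[h,w,e,-\ep]$ and $[w,h,-e,\ep]$ as ``the two natural candidates'' and verify both realise $\abs{s^\sigma}$, but your argument identifies $P^\sigma$ only if \emph{no other} prototype can realise that value, and nothing in your write-up rules this out. The paper closes this by comparing coefficients of $1$ and $\sqrt{D}$ in $\Q(\sqrt{D})$: writing $\abs{s(P')}$ in the normalised form $\frac{\pm e'+\sqrt{D}}{4w'}$ (resp.\ $\frac{\sqrt{D}-e'}{4h'}$ after rationalising with $D=e'^2+8w'h'$) and using $w',h'>0$, the equation $\abs{s(P')}=\abs{s^\sigma}$ forces either $\ep'=-\ep$, $e'=e$, $(w',h')=(h,w)$ or $\ep'=\ep$, $e'=-e$, $(w',h')=(w,h)$. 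This is a one-line fix with tools you already use, but as written your ``the whole problem reduces to identifying the unique admissible algebraic prototype realising this value'' presupposes precisely the uniqueness that needs proving.
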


\begin{proof}
Let $P=[w,h,e,\ep]$ be an algebraic cusp prototype. 
By \autoref{stablecoordinates}, the conjugate cusp will depend only on the action of $\sigma$ on $s$. Recall that for $\ep=1$, we have
\[s=s(P)=s^+=\frac{e+\sqrt{D}}{4w},\quad\text{i.e.}\quad (s^+)^\sigma=-\frac{-e+\sqrt{D}}{4w},\]
while for $\ep=-1$
\[s=s(P)=s^-=\frac{2w}{e+\sqrt{D}}=\frac{-e+\sqrt{D}}{4h},\quad\text{i.e.}\quad (s^-)^\sigma=-\frac{e+\sqrt{D}}{4h},\]
as $D=e^2+8wh$.

Now, consider a prototype $P'=[w',h',e',\ep']$ such that $\abs{s(P)^\sigma}=\abs{s(P')}$ (recall that by \autoref{ssign}, $s$ is determined only up to sign, due to the action of the Prym involution). Comparing coefficients in $\Q(\sqrt{D})$ and as $w,h>0$, it is clear that either $\ep'=-\ep$ and $e'=e$ or $e'=-e$ and $\ep'=\ep$. In the first case, $w'=h$ and $h'=w$, while in the second case $w'=w$ and $h'=h$.

Moreover, observe (using again that $D=e^2+8wh$) that
\[h<\frac{\lambda}{2}\iff e+\sqrt{D}>4h=\frac{D-e^2}{2w}
\iff \frac{\sqrt{D}-e}{2}<w,\]
and that any valid prototype $[w',h',e',1]$ must satisfy $w'>\lambda'$.
Hence, comparing $h$ to $\lambda$ (respectively $\nicefrac{\lambda}{2}$), determines which of the above described choices for $P'$ gives a valid prototype and thus yields the claim.
\end{proof}


We now combine \autoref{spin} with \autoref{galoiscusps} to show that, when $D$ is odd any two conjugate cusps are on different components.

\begin{prop}\label{switching}
Let $D\equiv 1\mod 8$ and $P_D=[w,h,e,\ep]$ be an algebraic cusp prototype. Then $P_D$ and $P_D^\sigma$ are on different components of $\W$. 

In particular, the cusps associated to $\bigl[\frac{D-1}{8},1,-1,-1\bigr]$ and $\bigl[\frac{D-1}{8},1,1,-1\bigr]$ lie on $\W^1$ and $\W^2$, respectively, and are conjugate.
\end{prop}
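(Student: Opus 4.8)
The plan is to combine \autoref{spin} with \autoref{galoiscusps}. By \autoref{spin}, the component containing a cusp is detected by the residue $e+\ep\bmod 4$; since $D$ is odd, $e$ is odd, so $e+\ep$ is even and this residue lies in $\{0,2\}$, with the value $2$ corresponding to $\W^1$ and the value $0$ to $\W^2$. It therefore suffices to check that the Galois action flips this residue, and the whole proposition reduces to tracking $e+\ep\bmod 4$ through the explicit description in \autoref{galoiscusps}.

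First I would split into the two cases of \autoref{galoiscusps}. If $P_D^\sigma=[h,w,e,-\ep]$, then the spin residue of $P_D^\sigma$ is $e-\ep$, and $(e+\ep)-(e-\ep)=2\ep\equiv 2\bmod 4$. If instead $P_D^\sigma=[w,h,-e,\ep]$, then the residue is $-e+\ep$, and $(e+\ep)-(-e+\ep)=2e\equiv 2\bmod 4$, where I use crucially that $e$ is odd. In either case the residue changes by $2\bmod 4$, so \autoref{spin} places $P_D$ and $P_D^\sigma$ on different components, which is the first assertion.

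For the explicit statement I would first confirm that $[\frac{D-1}{8},1,-1,-1]$ and $[\frac{D-1}{8},1,1,-1]$ are genuine algebraic prototypes: the relation $D=e^2+8wh$ reduces to $D=1+8\cdot\frac{D-1}{8}$, the conditions $0\le t<\gcd(w,h)$ and $\gcd(w,h,t,e)=1$ are automatic since $h=1$ forces $t=0$, and the admissibility bound $w=\frac{D-1}{8}>\nicefrac{\lambda}{2}$ holds for both prototypes since $D\ge 17$. Their spin residues are $e+\ep=-2\equiv 2$ and $e+\ep=0$, so by \autoref{spin} they lie on $\W^1$ and $\W^2$, respectively. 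Finally, taking $P=[\frac{D-1}{8},1,-1,-1]$ with $\ep=-1$ and $\lambda=\frac{-1+\sqrt D}{2}$, the bound $D\ge 17$ gives $\lambda>1=h$, so we are in the case $h<\lambda$ of \autoref{galoiscusps}, yielding $P^\sigma=[w,h,-e,\ep]=[\frac{D-1}{8},1,1,-1]$, the second prototype; hence the two explicit cusps are conjugate.

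This is essentially bookkeeping once \autoref{spin} and \autoref{galoiscusps} are available; the only points requiring care are that the spin-flip in the second branch genuinely relies on $e$ being odd (equivalently $D$ odd), and that one must check the inequality $h<\lambda$ to select the correct branch for the explicit pair. I do not anticipate a real obstacle.
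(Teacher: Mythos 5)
Your proposal is correct and follows essentially the same route as the paper: combine \autoref{spin} with \autoref{galoiscusps} and track the residue $e+\ep\bmod 4$, which shifts by $2\ep$ or $2e$ (both $\equiv 2\bmod 4$ since $e$ is odd) under conjugation — the paper merely packages the two cases into the single congruence $-e+\ep\equiv e-\ep\bmod 4$. Your explicit verification that $\bigl[\frac{D-1}{8},1,\mp1,-1\bigr]$ are valid prototypes and conjugate via the $h<\lambda$ branch is correct and in fact more thorough than the paper, which asserts the ``in particular'' clause without detail.
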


\begin{proof}
Let $P_D=[w,h,e,\ep]$ be an algebraic cusp prototype and denote by
\[c(P)=e+\ep\mod 4,\]
the component (see \autoref{spin}) of $\W$ that the associated cusp(s) of $P$ lie on. Then, by \autoref{galoiscusps}, we have
\[c(P^\sigma)\equiv -e+\ep\equiv e-\ep\mod 4,\]
as both $e$ and $\ep$ are $\pm 1\mod 4$. In particular, $c(P)\not\equiv c(P^\sigma)\mod 4$, hence the cusps lie on alternate components.
\end{proof}

\begin{proof}[Proof of \autoref{comphomeo}:]
Let $D\equiv 1\mod 8$, non-square, and $\W^i$ be a Teichmüller curve. Now, $\Gal(\overline{\Q}/\Q)$ acts on $\W^i$ and as this action extends to an action on the families of curves and their Jacobians, respects the (Prym) splitting, and maps eigenforms for real multiplication to eigenforms (for the same $D$!), it preserves the locus $\W$. Hence, any given element of $\Gal(\overline{\Q}/\Q)$ acts either trivially or interchanges the two components. But by \autoref{switching}, there exists an automorphism that \textit{does not} fix $\W^i$ and therefore the components are Galois-conjugate. In particular, they are homeomorphic.
\end{proof}

\printbibliography

\end{document}